\title{Criterion for linear independence of functions}
\author{I. V. Romanovski\footnote{ramanowski@tut.by},\\{\it Belarusian State University}}
\begin{document}
\maketitle
\begin{abstract}
Using a generalization of forward elimination, it is proved that
functions $f_1,\ldots,f_n:X\to\mathbb{A}$, where $\mathbb{A}$ is a
field, are linearly independent if and only if there exists a
nonsingular matrix $\left[f_i(x_j)\right]$ of size $n$, where
$x_1,\ldots,x_n\in X$.
\end{abstract}

\noindent{\bf Keywords:} Gaussian elimination, system of functions,
linearly independent functions.

\bigskip\noindent{\bf MSC2000:} 15A03.

\section{Introduction}

Suppose we are dealing with a separable kernel (see, e.g., \cite{1},
p. 4) $K:[a,b]\times[a,b]\to\mathbb{R}$ of an integral operator, i.
e.
\begin{equation}K(t,s)\equiv\sum\limits_{j=1}^nT_j(t)S_j(s),\label{nonzeroK}\end{equation}
where
\begin{equation}T_1,\ldots,
T_n,S_1,\ldots,S_n:[a,b]\to\mathbb{R}.\label{functionsTS}\end{equation}
Suppose $K\ne0$. Then we may consider each of the systems
$\{T_1,\ldots,T_n\}$, $\{S_1,\ldots,S_n\}$ linearly independent.
Indeed, starting from an expression of kind \eqref{nonzeroK}, we
consequently reduce the number of items while it is needed.

Assume that we need to express the functions \eqref{functionsTS} in
terms of $K$. In order to do it, we find such points (a proof of
existence and a way of finding will follow)
\begin{equation}t_1,\ldots,t_n,s_1,\ldots,s_n\in[a,b],\label{pointsts}\end{equation} that the square matrices
$T=[T_j(t_i)]$, $S=[S_j(s_i)]$ of size $n$ are nonsingular, write
out the identities
$$
K(t_i,s)\equiv\sum\limits_{j=1}^nT_j(t_i)S_j(s),~K(t,s_i)\equiv\sum\limits_{j=1}^nS_j(s_i)T_j(t),~i=1,\ldots,n,
$$
i. e.
$$
\left[\begin{array}{c}K(t_1,s)\\\vdots\\K(t_n,s)
\end{array}\right]\equiv T\cdot\left[\begin{array}{c}S_1(s)\\\vdots\\S_n(s)
\end{array}\right],~
\left[\begin{array}{c}K(t,s_1)\\\vdots\\K(t,s_n)
\end{array}\right]\equiv S\cdot\left[\begin{array}{c}T_1(s)\\\vdots\\T_n(s)
\end{array}\right],
$$
and obtain the desired expressions
\begin{equation}
\left[\begin{array}{c}S_1(s)\\\vdots\\S_n(s)
\end{array}\right]\equiv T^{-1}\cdot\left[\begin{array}{c}K(t_1,s)\\\vdots\\K(t_n,s)
\end{array}\right],~
\left[\begin{array}{c}T_1(s)\\\vdots\\T_n(s)
\end{array}\right]\equiv S^{-1}\cdot\left[\begin{array}{c}K(t,s_1)\\\vdots\\K(t,s_n)
\end{array}\right].
\label{expressions}\end{equation} The formulas \eqref{expressions}
let one, for example, prove smoothness of the functions
\eqref{functionsTS} if $K$ is smooth.

The existence of such points \eqref{pointsts} seems doubtless: if we
considered the set $\{1,\ldots,m\}$ instead of $[a,b]$, the matrices
$[T_j(i)]$ and $[S_j(i)]$ of size $m\times n$ would be of full rank
(see \cite{2}) and would correspondingly have nonsingular
submatrices $T=[T_j(t_i)]$ and $S=[S_j(s_i)]$ of size $n$. But what
about a strict proof?

\section{Results}

Let $X$ be a nonempty set, let
$\mathbb{A}$ be a field and let
$$\mathbb{F}_{m,n}=\left\{F=\left[\begin{array}{c}f_{11}~\ldots~f_{1n}\\.\:\:.\:\:.\:\:.\:\:.\:\:.\:\:.\\f_{m1}~\ldots~f_{mn}\end{array}\right]:X\to\mathbb{A}_{m,n}\right\},$$
where
$\mathbb{A}_{m,n}\!=\!\left\{\!A\!=\!\left[\!\begin{array}{c}\alpha_{11}~\ldots~\alpha_{1n}\\.\;\;.\;\;.\;\;.\;\;.\;\;.\;\;.\\\alpha_{m1}~\ldots~\alpha_{mn}\end{array}\!\right]:\alpha_{ij}\in\mathbb{A},~i=1,\ldots,m,~j=1,\ldots,n\right\}$,
$m=1,2,\ldots$, $n=1,2,\ldots$.

\newtheorem{lma}{Lemma}
\begin{lma}Let the entries of the column
$\,f\,=\,\left[\,f_i\,\right]\,\in\,F_{n,1}\,$ be linearly
independent functions and let the matrix
$A=\left[\alpha_{ij}\right]\in\mathbb{A}_{n,n}$ be nonsingular. Then
$g=\left[g_i\right]=Af$ is a column of linearly independent
functions.\end{lma}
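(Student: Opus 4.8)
The plan is to translate the functional linear independence of the $g_i$ into a purely algebraic statement about a coefficient vector, and then to cancel the matrix $A$ using its nonsingularity. The relevant definition is that a column of functions is linearly independent precisely when the only scalars $\beta_1,\ldots,\beta_n\in\mathbb{A}$ making the combination identically zero on $X$ are $\beta_1=\cdots=\beta_n=0$. So the whole task is to show that any such dependence among the $g_i$ forces the $\beta_i$ to vanish.

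First I would take an arbitrary dependence: suppose $\sum_{i=1}^n\beta_i g_i=0$ as a function on $X$, with $\beta_i\in\mathbb{A}$. Substituting $g_i=\sum_{j=1}^n\alpha_{ij}f_j$ and interchanging the two finite sums, this rewrites as $\sum_{j=1}^n\left(\sum_{i=1}^n\beta_i\alpha_{ij}\right)f_j=0$. The inner coefficient is exactly the $j$-th entry of the row vector $\beta^\top A$, where $\beta=\left[\beta_i\right]$.

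Next I would invoke the hypothesis that $f_1,\ldots,f_n$ are linearly independent: since the displayed combination of the $f_j$ vanishes identically, every one of its coefficients must be zero, i.e. $\beta^\top A=0$. Because $\mathbb{A}$ is a field and $A$ is nonsingular, $A$ is invertible, so multiplying on the right by $A^{-1}$ yields $\beta^\top=0$, hence every $\beta_i=0$. Thus the only dependence among the $g_i$ is trivial, which is the assertion.

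I do not anticipate a genuine obstacle here; the entire content is the interchange of two finite summations followed by cancellation of an invertible matrix. The only points that warrant a little care are the index bookkeeping — keeping straight that the $\beta_i$ enter as a row vector, so that it is $\beta^\top A$ rather than $A\beta$ that must vanish — and the observation that over a field nonsingularity is the same as two-sided invertibility, which is precisely what legitimizes the final cancellation.
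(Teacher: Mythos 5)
Your proof is correct and coincides in essence with the paper's own argument: both reduce the dependence $\sum_i\beta_i g_i=0$ to the statement $\beta^\top A=0$ (you by explicitly interchanging the two finite sums, the paper by writing the same computation as $\beta^\top(Af)=(\beta^\top A)f$), then invoke linear independence of the $f_j$ and nonsingularity of $A$ to conclude $\beta=0$. No gap; the two write-ups differ only in notation.
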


\begin{proof}Let $\beta_1,\ldots,\beta_n\in\mathbb{A}$ and
$\sum\limits_{i=1}^n\beta_ig_i=0$. We are going to prove that
\begin{equation}\beta_1=\ldots=\beta_n=0.\label{zeromu}\end{equation}
Indeed, let's denote the row
$\left[\beta_j\right]\in\mathbb{A}_{1,n}$ by $\beta^T$ and multiply
the equity $g=Af$ by $\beta^T$ from the left. We have
$0=(\beta^TA)f$. Note that $\beta^T A\in F_{1,n}$ is a row. Since
the entries of $f$ are linearly independent functions then $\beta^T
A=0$. The matrix $A$ is nonsingular, therefore the equities
\eqref{zeromu} hold.
\end{proof}

Let $x=\left(x_1,\ldots,x_n\right)\in X^n$, $f\in\mathbb{F}_{n,1}$
and let $f(x)$ denote the matrix
$\left[f_i(x_j)\right]\in\mathbb{A}_{n,n}$. Obviously,
\begin{equation}(Af)(x)=A\cdot f(x)\label{assoc}\end{equation} for any
$A\in\mathbb{A}_{n,n}$.

\begin{lma}Let the matrices
$A,(Af)(x)\in\mathbb{A}_{n,n}$ be nonsingular. Then the matrix
$f(x)$ is nonsingular.
\end{lma}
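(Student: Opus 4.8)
The plan is to exploit the identity \eqref{assoc}, which expresses $(Af)(x)$ as the matrix product $A\cdot f(x)$, together with the multiplicativity of the determinant over the field $\mathbb{A}$. First I would write $(Af)(x)=A\cdot f(x)$ directly from \eqref{assoc}, so that the three matrices $A$, $f(x)$ and $(Af)(x)$ are linked by a single matrix equation in $\mathbb{A}_{n,n}$. Taking determinants of both sides then yields $\det\bigl((Af)(x)\bigr)=\det(A)\cdot\det\bigl(f(x)\bigr)$.

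Next I would invoke the hypotheses. Since $A$ and $(Af)(x)$ are nonsingular, both $\det(A)$ and $\det\bigl((Af)(x)\bigr)$ are nonzero elements of $\mathbb{A}$. Because $\mathbb{A}$ is a field and hence has no zero divisors, the displayed factorization forces $\det\bigl(f(x)\bigr)\ne0$: otherwise the right-hand side would vanish while the left-hand side does not. This inequality is precisely the assertion that $f(x)$ is nonsingular, which completes the argument.

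An equivalent route avoids determinants entirely: as $A$ is nonsingular it is invertible in $\mathbb{A}_{n,n}$, so \eqref{assoc} rearranges to $f(x)=A^{-1}\cdot(Af)(x)$, exhibiting $f(x)$ as a product of two nonsingular matrices and therefore nonsingular. I do not expect a genuine obstacle here; the whole content is the reduction, via \eqref{assoc}, to the elementary behaviour of nonsingularity under matrix multiplication. It is worth noting that the linear independence of the entries of $f$ is not needed for this lemma — that hypothesis was the concern of the previous lemma — so the only point requiring care is that \eqref{assoc} applies verbatim to the matrices at hand.
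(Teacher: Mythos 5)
Your proof is correct and follows essentially the same route as the paper: apply \eqref{assoc} to get $(Af)(x)=A\cdot f(x)$, take determinants, and conclude $\det\bigl(f(x)\bigr)=\det\bigl((Af)(x)\bigr)/\det(A)\ne0$. Your alternative remark via $f(x)=A^{-1}\cdot(Af)(x)$ is a harmless bonus, but the core argument coincides with the paper's one-line determinant proof.
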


\begin{proof}It follows from the formula \eqref{assoc} that $\det
\left(f(x)\right)={\det\left((Af)(x)\right)\over\det A}\ne0$.
\end{proof}

Further, given a column $f=\left[f_i\right]\in \mathbb{F}_{n,1}$ of
linearly independent functions, we will find such a vector $x\in
X^n$ and such a matrix
\begin{equation}A=\left[\begin{array}{cccc}1&0&\ldots&0\\\alpha_{21}&1&\ldots&0\\\vdots&&\ddots&\\\alpha_{n1}&\alpha_{n2}&&1\end{array}\right]\in\mathbb{A}_{n,n}\label{L}\end{equation} that
$(Af)(x)$ is of kind
\begin{equation}\left[\begin{array}{cccc}\beta_{11}&\beta_{12}&\ldots&\beta_{1n}\\0&\beta_{22}&\ldots&\beta_{2n}\\\vdots&&\ddots&\vdots\\0&0&&\beta_{nn}\end{array}\right]\in\mathbb{A}_{n,n},~\beta_{ii}\ne0,~i=1,\ldots,n.\label{U}\end{equation}
Because of nonsingularity of matrices $A,(Af)(x)\in\mathbb{A}_{n,n}$
and lemma 2, the matrix $f(x)$ will be nonsingular.

\newtheorem{thma}{Theorem}
\begin{thma}Let the entries of the column
$\,f=\left[\,f_i\,\right]\in F_{n,1}\,$ be linearly independent
functions. Then there exists such a vector $x\in X^n$ and such a
matrix $A\in\mathbb{A}_{n,n}$ of kind \eqref{L} that the matrix
$(Af)(x)$ is of kind \eqref{U}.\end{thma}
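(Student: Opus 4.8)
The plan is to argue by induction on $n$, carrying out one stage of forward (Gaussian) elimination at each step. For the base case $n=1$, linear independence of the single function $f_1$ forces $f_1\ne0$, so some $x_1\in X$ satisfies $f_1(x_1)\ne0$; taking $A=[1]$ makes $(Af)(x)=[f_1(x_1)]$ of the form \eqref{U} with $\beta_{11}\ne0$.

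For the inductive step I would first select a pivot. Since $f_1,\ldots,f_n$ are linearly independent, in particular $f_1\ne0$, so there is a point $x_1\in X$ with $f_1(x_1)\ne0$; this becomes $\beta_{11}$. I then eliminate the entries below the pivot in the first column by setting, for $i=2,\ldots,n$,
$$f_i^{(1)}=f_i-\frac{f_i(x_1)}{f_1(x_1)}\,f_1,$$
so that $f_i^{(1)}(x_1)=0$. This passage from $f$ to $\bigl[f_1,f_2^{(1)},\ldots,f_n^{(1)}\bigr]^{T}$ is precisely left multiplication by a matrix $L_1\in\mathbb{A}_{n,n}$ of kind \eqref{L} that fixes $f_1$.

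By lemma 1, since $L_1$ is nonsingular, the entries of $L_1f$ are linearly independent, and hence so are the entries of the shorter column $\bigl[f_2^{(1)},\ldots,f_n^{(1)}\bigr]^{T}$, being a subfamily of a linearly independent family. The induction hypothesis applied to these $n-1$ functions yields points $x_2,\ldots,x_n$ and a matrix $A'\in\mathbb{A}_{n-1,n-1}$ of kind \eqref{L} making the corresponding $(n-1)\times(n-1)$ matrix upper triangular with nonzero diagonal. Embedding $A'$ as the lower right block of a matrix $A''\in\mathbb{A}_{n,n}$ of kind \eqref{L} that is the identity on the first coordinate, and putting $A=A''L_1$ (again of kind \eqref{L}, since such matrices are closed under multiplication), I obtain $g=Af$. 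Each $g_i$ with $i\ge2$ is a combination of $f_2^{(1)},\ldots,f_i^{(1)}$, all vanishing at $x_1$, so the first column of $(Af)(x)$ below $\beta_{11}$ is zero; the induction hypothesis supplies the remaining zeros and the nonzero diagonal, so $(Af)(x)$ has the form \eqref{U}.

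I expect the main obstacle to lie not in any single computation but in the structural bookkeeping: checking that the elimination step, the embedding, and their composition all stay of kind \eqref{L}, and, most essentially, that the reduced functions $f_i^{(1)}$ remain linearly independent so that the induction can proceed --- a point that lemma 1 settles cleanly. Confirming that the functions $g_i$ produced by the recursive call still vanish at the previously chosen point $x_1$, thereby completing the column of zeros under the pivot, is the one place where the interaction between the two elimination stages must be checked, although it follows at once from the fact that every $f_i^{(1)}$ was arranged to vanish at $x_1$.
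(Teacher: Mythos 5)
Your proof is correct and follows essentially the same route as the paper's: the same pivot choice $f_1(x_1)\ne0$, the same elimination matrix (your $L_1$ is the paper's $M$), the same use of Lemma 1 together with the subsystem property to preserve linear independence, and the same embedding of the inductive matrix as a lower-right block followed by composition of matrices of kind \eqref{L}. The only cosmetic difference is that the paper verifies the zero subcolumn via the block identity $(\tilde B\tilde g)(x_1)=\tilde B\cdot\tilde g(x_1)=0$ using \eqref{assoc}, whereas you argue directly that each $g_i$, $i\ge2$, vanishes at $x_1$; these are the same check.
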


\begin{proof}Let's use mathematical induction on $n$.
\begin{enumerate}
\item Let $n=1$. Then there exists such $x_1\in X$ that
$f_1(x_1)\ne0$, because otherwise $f_1=0$ and hence the system
$\{f_1\}$ is linearly dependent.
\item Let $n>1$. As in the
case 1, we find such $x_1\in X$ that $f_1(x_1)\ne0$. Let
$$
M=\left[\begin{array}{cccc}1&0&\ldots&0\\-{f_2(x_1)\over
f_1(x_1)}&1&&0\\\vdots&&\ddots&\\-{f_n(x_1)\over
f_1(x_1)}&0&&1\end{array}\right],
$$
$g=[g_i]=Mf$. Because of nonsingularity of the matrix $M$ and lemma
1, $g$ is a column of linearly independent functions. Also
$$g_1=f_1,~g_i(x_1)=-{f_i(x_1)\over f_1(x_1)}f_1(x_1)+f_i(x_1)=0,~i=2,\ldots,n.$$ Let's
consider the following block partition
$g=\left[\begin{array}{c}f_1\\\hline\\[-14pt]\tilde
g\end{array}\right]$, where $\tilde g\in\mathbb{F}_{n-1,1}$. Since
any subsystem of a linearly independent system is itself linearly
independent, the entries of $\tilde g$ are linearly independent
functions. Moreover, \begin{equation}\tilde
g(x_1)=0.\label{zerog}\end{equation}

Let's find such a vector $\tilde x=\left(\tilde x_1,\ldots,\tilde
x_{n-1}\right)\in X^{n-1}$ and such a matrix $\tilde
B\in\mathbb{A}_{n-1,n-1}$ of kind \eqref{L} that $(\tilde B \tilde
g)(\tilde x)$ is of kind \eqref{U}. Let $x=\left(x_1,\tilde
x_1,\ldots,\tilde x_{n-1}\right)\in X^n$,
$B=\left[\begin{array}{c|c}1&0\\\hline\\[-12pt]0&\tilde
B\end{array}\right]\in\mathbb{A}_{n,n}$. Obviously, $B$ is of kind
$\eqref{L}$. Note, that $(Bg)(x)=\left[\begin{array}{c}f_1\\\hline\\[-12pt]\tilde B\tilde
g\end{array}\right](x)=\left[\begin{array}{c|c}f_1(x_1)&f_1(\tilde
x)\\\hline\\[-12pt](\tilde B\tilde g)(x_1)&(\tilde B\tilde g)(\tilde x)\end{array}\right]$ is of
kind \eqref{U}, because $f(x_1)\ne 0$, $(\tilde B\tilde
g)(x_1)=\tilde B\cdot\tilde g(x_1)=0$, by \eqref{assoc},
\eqref{zerog}, and $(\tilde B \tilde g)(\tilde x)$ is of kind
\eqref{U}.

Let $A=BM$. Then $A\in\mathbb{A}_{nn}$, $A$ is of kind \eqref{L} (as
a product of matrices of such kind) and $(Af)(x)=(BMf)(x)=(Bg)(x)$
is of kind \eqref{U}.
\end{enumerate}
\end{proof}

\begin{thma}[{\bf criterion for linear independence of
functions}]The functions $f_1,\ldots,f_n:X\to\mathbb{F}$ are
linearly independent if and only if there exists such
$\left(x_1,\ldots,x_n\right)\in X^n$ that the matrix
$\left[f_i(x_j)\right]\in\mathbb{A}_{n,n}$ is nonsingular.\end{thma}

\begin{proof}Suppose that the entries of the column
$f=[f_i]\in\mathbb{F}_{n,1}$ are linearly independent functions.
Then, by theorem 1 and lemma 2, there exists such $x\in X^n$ that
$f(x)$ is nonsingular.

Now let $x\in X^n$ and let the matrix
$\left[f_i(x_j)\right]\in\mathbb{A}_{n,n}$ be nonsingular. Assume
that $\alpha_1,\ldots,\alpha_n\in\mathbb{A}$ and
$\sum\limits_{i=1}^n\alpha_if_i=0$. In particular, we have
\begin{equation}\sum\limits_{i=1}^n\alpha_if_i(x_j)=0,~j=1,\ldots,n.\label{SLAE}\end{equation}
Considering \eqref{SLAE} a nondegenerate system of linear algebraic
equations in unknowns $\alpha_1,\ldots,\alpha_n$ we conclude that
$\alpha_1=\ldots=\alpha_n=0$. Thus the functions $f_1,\ldots,f_n$
are linearly independent.
\end{proof}

\newtheorem{ex}{Example}
\begin{ex}
Let $\mathbb{A}=\mathbb{C}$ and let $f_1,f_2,f_2\in\mathbb{F}$.
Suppose that $|f_1(x_1)|>|f_1(x_2)|+|f_1(x_3)|$,
$|f_2(x_2)|>|f_2(x_1)|+|f_2(x_3)|$ and $|f_3(x_3)|>
|f_3(x_1)|+|f_3(x_2)|$. Then the matrix
$\left[f_i(x_j)\right]\in\mathbb{C}_{3,3}$ is diagonally dominant
(see \cite{3}) and therefore nonsingular. Thus, by theorem 2, the
functions $f_1,f_2,f_3$ are linearly independent.
\end{ex}

\begin{ex}
Let $\mathbb{A}=\mathbb{C}$ and let $f_1,f_2,f_2\in\mathbb{F}$.
Suppose that $|f_1(x_1)|>|f_2(x_1)|+|f_3(x_1)|$,
$|f_2(x_2)|>|f_1(x_2)|+|f_3(x_2)|$ and $|f_3(x_3)|>
|f_1(x_3)|+|f_2(x_3)|$. Analogously to the previous example, the
matrix $\left[f_i(x_j)\right]\in\mathbb{C}_{3,3}$ is nonsingular and
thus the functions $f_1,f_2,f_3$ are linearly independent.
\end{ex}

\begin{ex}
Let $X=Y\times Z$ and let $f_1,\ldots,f_n\in\mathbb{F}$. Suppose
that $z^\ast\in Z$ and $\varphi_i:Y\to\mathbb{A}$ ($i=1,\ldots,n$)
are such linearly independent functions that $\varphi_i(y)\equiv
f_i(y,z^\ast)$, $i=1,\ldots,n$. Then, by theorem 2, there exists
such $(y_1,\ldots,y_n)\in Y^n$ that the matrix
$\left[\varphi_i(y_j)\right]\in\mathbb{A}_{n,n}$ is nonsingular.
Note that this matrix equals
$\left[f_i(x_j)\right]\in\mathbb{A}_{n,n}$, where
$x_j=(y_j,z^\ast)$, $j=1,\ldots,n$. Thus, by theorem 2, the
functions $f_1,\ldots,f_n$ are linearly independent.
\end{ex}

Taking into account the notion of rank of a system of vectors (see
\cite{4}, p. 52) and, in particular, of rank of a system of
functions in the linear space $\mathbb{F}$, we prove a more general
theorem.

\begin{thma}Let $f_1,\ldots,f_n\in\mathbb{F}$. Then
$$\mathrm{rank}\{f_1,\ldots,f_n\}=\max\mathop{\mathrm{rank}}\limits_{x_1,\ldots,x_n\in
X}\left[f_i(x_j)\right].$$\end{thma}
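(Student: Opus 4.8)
The plan is to write $r=\mathrm{rank}\{f_1,\ldots,f_n\}$ and $\rho=\max_{x_1,\ldots,x_n\in X}\mathrm{rank}\left[f_i(x_j)\right]$ and to prove the two inequalities $\rho\le r$ and $\rho\ge r$ separately. First I would dispose of $\rho\le r$. Fix any $x=(x_1,\ldots,x_n)\in X^n$ and consider the evaluation map $\mathbb{F}\to\mathbb{A}_{1,n}$, $h\mapsto\left[h(x_1),\ldots,h(x_n)\right]$, which is $\mathbb{A}$-linear and carries each $f_i$ to the $i$-th row of $\left[f_i(x_j)\right]$. Since a linear map cannot enlarge the dimension of a spanning set, the row space of this matrix has dimension at most $\dim\mathrm{span}\{f_1,\ldots,f_n\}=r$; equivalently, every relation $\sum_i\alpha_i f_i=0$ descends to the relation $\sum_i\alpha_i f_i(x_j)=0$ among the rows. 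Hence $\mathrm{rank}\left[f_i(x_j)\right]\le r$ for every $x$, so $\rho\le r$.

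For the reverse inequality I would exhibit a single good vector of points. Choose a maximal linearly independent subsystem $f_{i_1},\ldots,f_{i_r}$; by the definition of rank it has exactly $r$ members. Applying Theorem~2 to these $r$ functions (that is, with $r$ in place of $n$) produces points $y_1,\ldots,y_r\in X$ for which the $r\times r$ matrix $\left[f_{i_k}(y_l)\right]_{k,l=1}^{r}$ is nonsingular. I then form $x\in X^n$ by setting $x_1=y_1,\ldots,x_r=y_r$ and choosing $x_{r+1},\ldots,x_n$ arbitrarily. The matrix $\left[f_i(x_j)\right]$ contains the nonsingular $r\times r$ block just produced (rows $i_1,\ldots,i_r$ and columns $1,\ldots,r$), so its rank is at least $r$; combined with the first part it equals $r$. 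Thus the value $r$ is attained and, being an upper bound for all $x$, it is the maximum, giving $\rho=r$.

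I expect no genuine obstacle, since the statement is the rank-theoretic strengthening of Theorem~2 and the substantive work --- producing evaluation points for a linearly independent system --- is already carried out by Theorem~1 and Lemma~2. The only steps deserving a word of care are the application of Theorem~2 to the subsystem of size $r$ rather than $n$, and the verification that the arbitrary padding of the last $n-r$ coordinates cannot pull the rank below $r$; the latter is immediate because the full-rank $r\times r$ submatrix persists regardless of the padding.
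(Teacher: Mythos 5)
Your proof is correct, but one of its two halves takes a genuinely different route from the paper. The attainment direction ($\rho\ge r$) is essentially the paper's argument: pick a maximal linearly independent subsystem of size $r$, apply Theorem~2 to it to get a nonsingular $r\times r$ matrix $\left[f_{i_k}(y_l)\right]$, and conclude. In fact you are more careful than the paper here: the paper stops at the $r$-tuple $(x_1,\ldots,x_r)\in X^r$ and asserts $r'\ge r$, leaving implicit the padding to an $n$-tuple that the definition of the maximum requires, whereas you spell out that the arbitrary choice of $x_{r+1},\ldots,x_n$ preserves the nonsingular $r\times r$ block. The upper bound ($\rho\le r$) is where you diverge: the paper reuses Theorem~2 in the reverse direction --- from a maximum-rank matrix it extracts a nonsingular $r'\times r'$ submatrix and concludes that the corresponding $r'$ functions are linearly independent, so $r\ge r'$ --- while you argue directly that evaluation $h\mapsto\left[h(x_1),\ldots,h(x_n)\right]$ is an $\mathbb{A}$-linear map, so the row space of $\left[f_i(x_j)\right]$ is the image of $\mathrm{span}\{f_1,\ldots,f_n\}$ and has dimension at most $r$. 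Both are valid; the paper's version makes Theorem~3 a clean two-sided corollary of the criterion (Theorem~2), while your linear-map argument is more elementary, needs no appeal to Theorem~2 for this direction, and gives the slightly stronger pointwise bound $\mathrm{rank}\left[f_i(x_j)\right]\le r$ for \emph{every} $x\in X^n$, not only the maximizing one. One small degenerate case you gloss over and the paper treats explicitly: if $r=0$ (all $f_i=0$), your ``maximal linearly independent subsystem'' is empty and the appeal to Theorem~2 is vacuous; it is cleaner to note that then every matrix $\left[f_i(x_j)\right]$ is zero, so both sides equal $0$.
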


\begin{proof}Let $r=\mathrm{rank}\{f_1,\ldots,f_n\}$ and let
$r'=\max\mathop{\mathrm{rank}}\limits_{x_1,\ldots,x_n\in
X}\left[f_i(x_j)\right]$. Note that $r,r'\ge0$.

Let's prove that $r'\ge r$. Indeed, if $r=0$, the inequality $r'\ge
r$ holds. Let $r>0$. Then there exists a subset
$\{f_{k_1},\ldots,f_{k_{r}}\}\subseteq\{f_1,\ldots,f_n\}$ of $r$
linearly independent functions. Also, by theorem 2, there exists
such $(x_1,\ldots,x_r)\in X^r$ that the matrix
$\left[f_{k_i}(x_j)\right]\in\mathbb{A}_{r,r}$ is nonsingular. Hence
$r'\ge r$.

Now let's prove that $r\ge r'$. If $r'=0$, then $r\ge r'$. Let
$r'>0$. Then there exists such a subset
$\{f_{k_1},\ldots,f_{k_{r}}\}\subseteq\{f_1,\ldots,f_n\}$ and such
$(x_1,\ldots,x_{r'})\in X^{r'}$ that the matrix
$\left[f_{k_i}(x_j)\right]\in\mathbb{A}_{r',r'}$ is nonsingular.
Therefore, by theorem 2, the functions $f_{k_1},\ldots,f_{k_{r'}}$
are linearly independent. Thus $r\ge r'$.
\end{proof}

\end{document}